\documentclass[11pt]{amsart}
\usepackage{amsfonts}
\usepackage[all,cmtip]{xy}
\usepackage{amsmath}
\usepackage{amsthm}
\usepackage{amssymb}
\usepackage[bookmarks, colorlinks=true, linkcolor=blue, citecolor=blue, urlcolor=blue]{hyperref}

\setcounter{tocdepth}{1}

\usepackage{anysize}
\marginsize{1in}{1in}{1in}{1in}

\usepackage{color}
\definecolor{prpl}{rgb}{0.7, 0.0, 0.7}

\newcommand{\arxiv}[1]{\href{http://arxiv.org/abs/#1}{{\tt arXiv:#1}}}

\newtheorem{theorem}[equation]{Theorem}

\newtheorem{lemma}[equation]{Lemma}
\newtheorem{corollary}[equation]{Corollary}
\newtheorem{conjecture}[equation]{Conjecture}

\theoremstyle{definition}
\newtheorem{rmk}[equation]{Remark}
\newenvironment{remark}[1][]{\begin{rmk}[#1] \pushQED{\qed}}{\popQED \end{rmk}}
\newtheorem{eg}[equation]{Example}

\newtheorem{defn}[equation]{Definition}

\newcommand{\lw}{{\textstyle \bigwedge}}

\DeclareMathOperator{\Sym}{Sym}
\DeclareMathOperator{\Tor}{Tor}

\newcommand{\bC}{\mathbf{C}}

\newcommand{\bE}{\mathbf{E}}

\newcommand{\cP}{\mathcal{P}}

\newcommand{\bR}{\mathbf{R}}

\newcommand{\bS}{\mathbf{S}}

\newcommand{\bT}{\mathbf{T}}

\newcommand{\bV}{\mathbf{V}}

\title{A remark on a conjecture of Derksen}
\author{Andrew Snowden}
\date{\today}
\thanks{The author was supported by NSF fellowship DMS-0902661.}

\begin{document}

\begin{abstract}
Let $V$ be a complex representation of a finite group $G$ of order $g$.  Derksen conjectured that the $p$th syzygies of the invariant ring $\Sym(V)^G$ are generated in degrees $\le (p+1)g$.  We point out that a simple application of the theory of twisted commutative algebras --- using an idea due to Weyl --- gives the bound $pg^3$.
\end{abstract}

\maketitle

Fix a finite group $G$ of order $g$.  Let $V$ be a finite dimensional complex representation of $G$ and put $R=\Sym(V)^G$, which we regard as a graded ring.  Let $E \subset R$ be a homogeneous vector subspace generating $R$ as an algebra, and put $S=\Sym(E)$, so that $S \to R$ is a surjection of graded algebras.  The space $\Tor^S_p(R, \bC)$ is then a graded vector space, called the space of $p$-syzygies of $R$.  We let $s_p(V; E)$ be the maximal degree occurring in it.  One can show that if $E \subset E'$ then $s_p(V; E) \le s_p(V; E')$.  Furthermore, $s_p(V; E)$ is independent of $E$ if $E$ is chosen to be minimal; we denote this common value by $s_p(V)$.  We then have the following conjecture of Derksen (see \cite[Conj.~3]{Derksen} for a more precise version):

\begin{conjecture}
We have $s_p(V) \le (p+1)g$ for any $V$.
\end{conjecture}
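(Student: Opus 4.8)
The plan is to control every syzygy of $R=\Sym(V)^G$ at once by organising its vector invariants into a twisted commutative algebra (tca). For $n\ge 0$ put $A_n=\Sym(V\otimes\bC^n)^G$, with $\GL_n$ acting through the factor $\bC^n$ and $G$ acting diagonally on $V$; as $n$ varies these assemble into a tca $A$, which is precisely the subalgebra of $G$-invariants in the free tca $\Sym(V\otimes\bC^\bullet)$ on the standard functor $\bC^\bullet$. The point of this packaging is that evaluation of a tca at $\bC^1$ is an exact symmetric monoidal functor (we are in characteristic $0$) that sends a free tca to an honest polynomial ring, sends free modules to free modules, and preserves the internal (polynomial) degree. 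Since $A$ evaluated at $\bC^1$ is $R$, any resolution $F_\bullet\to A$ by free modules over a free tca $\Sym(M)$ evaluates to a free resolution of $R$ over the polynomial ring $S=\Sym\bigl(M(\bC^1)\bigr)$, so $\Tor^S_p(R,\bC)$ is a subquotient of the (degree‑preserving) evaluation at $\bC^1$ of the generating functor of $F_p$. Combined with the inequality $s_p(V)\le s_p(V;E')$ valid for any generating space $E'$, this gives $s_p(V)\le$ (top internal degree of $F_p$). So it suffices to build a free resolution of $A$ as a tca with good control on internal degrees.

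First I would run the generation step, which is Weyl's polarisation idea. Noether's proof that $\Sym(V)^G$ is generated in degree $\le g$ produces, for each $v\in V$, invariants of degree $j\le g$ built from the $G$-orbit of $v$; polarising these (each polarisation involves at most $g=|G|$ of the new vector variables, since an orbit has at most $g$ points) and using a Vandermonde argument to restitute, one finds that $A$ is generated as a tca in polynomial degree $\le g$ and width $\le g$. Equivalently, $M$ may be taken to be a finite‑length polynomial functor concentrated in degrees $\le g$; evaluating at $\bC^1$ already recovers Noether's bound $s_0(V)\le g$. Now $\Sym(M)$ is a Noetherian tca, so $A$ has a free resolution that is finitely generated in each homological degree, and the effective degree estimates behind Noetherianity --- concretely, a Gr\"obner basis for the ideal of $A$ inside $\Sym(M)$, together with the fact that $\Sym(V\otimes\bC^\bullet)$ is module‑finite over $A$ with generators of degree bounded in terms of $g$ --- let one propagate the bound $\le g$ through the higher syzygy modules. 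Tracking constants through this propagation is where the bound $s_p(V)\le pg^3$ emerges: morally, one factor of $g$ from Weyl's $g$-vector bound, one from Noether's degree-$g$ module generators, and one more from the accumulated cost of the homological (change‑of‑rings/Koszul) steps.

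The hard part is not the mechanics but bringing the constant down to the conjectural value. Each numerical input above is far from sharp --- Weyl's bound, Noether's bound, and above all the step‑by‑step degree inflation in the tca resolution --- so this route overshoots $(p+1)g$ by two honest powers of $g$, and even at $p=1$ it yields only relations in degree $\le g^3$ rather than $2g$. Closing the gap appears to require either a resolution‑level refinement of Weyl's theorem that pins down the \emph{minimal} tca resolution of $A=\Sym(V\otimes\bC^\bullet)^G$ rather than just its generators, or an altogether different argument that exhibits the linear factor $p+1$ directly; the estimate above should be regarded as the cheap byproduct of the tca formalism, not as evidence for the precise shape of the conjecture.
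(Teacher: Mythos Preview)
The statement is a conjecture that the paper leaves open (only $p=1$ is known, due to Derksen); the paper proves instead the weaker bound $s_p(V)\le pg^3$. Your proposal is candid about this: you do not claim to reach $(p+1)g$, and your closing paragraph correctly flags the gap between the tca machinery and the conjectured constant. So as a proof of the conjecture, neither your argument nor the paper's succeeds, and there is no ``paper's proof'' to compare against.

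Where your sketch does fall short is in the route to $pg^3$ itself, which the paper \emph{does} prove but by a different and sharper mechanism. The paper never resolves the tca or invokes Noetherianity. It works with the multivariable functor $\mathbf{V}(U_\bullet)=\bigoplus_i V_i\otimes U_i$ (one multiplicity space per irreducible $V_i$), observes that $\Tor_p^{\mathbf{S}}(\mathbf{R},\bC)$ is a subquotient of $\mathbf{R}\otimes\lw^p(\mathbf{E})$ via the Koszul complex, and uses the Cauchy formula plus Littlewood--Richardson to bound the number of rows appearing in the $i$th slot by $\beta p+d_i$. That row bound means the Tor functor is already detected at $U_i=\bC^{\beta p+d_i}$, i.e.\ at a single representation $W_p$ of dimension $\beta mp+g$; one then applies Derksen's explicit estimate $s'_p(V)\le(\beta-1)\dim V+\beta p$ to $V=W_p$ and multiplies out. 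The linear dependence on $p$ comes entirely from Derksen's lemma, not from any tca bookkeeping.

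Your argument, by contrast, hinges on ``the effective degree estimates behind Noetherianity'' and a Gr\"obner basis for the kernel of $\Sym(M)\to A$, followed by ``tracking constants through this propagation''. That is a genuine gap: Noetherianity of tcas yields finite generation of each syzygy module but carries no degree bound whatsoever, and Gr\"obner methods do not, without substantial further input, produce bounds linear in $p$ --- a priori the growth could be far worse. The heuristic of ``one factor of $g$ from Weyl, one from Noether, one from homological cost'' is not an argument, and nothing in your outline explains why degrees should grow linearly in $p$ rather than, say, exponentially. The paper sidesteps this entirely by reducing to a single finite-dimensional instance $W_p$ and quoting a pre-existing bound there; if you want your tca-resolution route to reach even $pg^3$, you would need to supply that missing effective step.
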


Derksen \cite[Thm.~2]{Derksen} proved the conjecture for $p=1$, but the general case is open.  To state our main result, we first introduce some notation.  Let $\beta(V)$ be the minimal integer such that $R$ is generated in degrees $\le \beta(V)$.  Let $\beta$ be the maximum value of $\beta(V)$ over all $V$, the so-called Noether number of $G$.  Noether's theorem \cite[\S 3]{Wehlau} states that $\beta \le g$.  Let $d_1, \ldots, d_n$ be the degrees of the irreducible representations of $G$ and let $m$ be the sum of the $d_i$.  Finally, put $\delta_p=(\beta-1) g-(m-1) \beta p$; note that this is negative for $p \gg 0$.

\begin{theorem}
\label{mainthm}
We have $s_p(V) \le \beta^2mp+\delta_p$ for any $V$.
\end{theorem}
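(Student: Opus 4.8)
The plan is to realize $R=\Sym(V)^G$ as the value at a one-dimensional vector space of a twisted commutative algebra and bound the syzygies there. For a finite-dimensional vector space $U$ put $A(U)=\Sym(V\otimes U)^G$, with $G$ acting through $V$ only; as $U$ varies this is a graded twisted commutative algebra (tca) over $\bC$, and $A(\bC)=R$. In characteristic zero the category of polynomial functors is semisimple, so the evaluation functor $M\mapsto M(\bC)$ is exact on tca-modules; being exact and symmetric monoidal, it commutes with $\Tor$, and it preserves internal degree (it discards all Schur summands of $M$ except the symmetric-power parts, and $\Sym^d(\bC)=\bC$ sits in degree $d$). Hence, if $P$ is any free tca surjecting onto $A$, then $\Tor^{P(\bC)}_p(R,\bC)=\bigl(\Tor^P_p(A,\bC)\bigr)(\bC)$; since $P(\bC)=\Sym(E)$ for a homogeneous generating subspace $E\subseteq R$ and one may pick a minimal generating set inside $E$, the monotonicity of $s_p(V;-)$ reduces Theorem~\ref{mainthm} to a bound on the top internal degree of the tca-module $\Tor^P_p(A,\bC)$ --- a Castelnuovo--Mumford regularity estimate for $A$ as a tca.

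Two facts from Weyl's study of vector invariants go in next. First, $A$ is generated as a tca in degrees $\le\beta$: by Noether's bound each $A(\bC^k)=\Sym(V^{\oplus k})^G$ is generated as an algebra in degrees $\le\beta(V^{\oplus k})\le\beta$, and surjectivity of a map of polynomial functors can be tested on all the $\bC^k$ simultaneously, so the degree-$\le\beta$ part of $A$ already generates it; thus $P$ may be taken free on a polynomial functor $F$ concentrated in degrees $\le\beta$. Second, writing $V=\bigoplus_i V_i\otimes M_i$ and using that each $M_i$ is a retract of a fixed $\bC^N$, one realizes $A$ as a tca-retract of the reparametrization $U\mapsto A_\circ(\bC^N\otimes U)$ of the universal invariant tca $A_\circ(U)=\Sym\bigl((\textstyle\bigoplus_i V_i)\otimes U\bigr)^G$ of $G$, the retraction being assembled from $M_i\hookrightarrow\bC^N\twoheadrightarrow M_i$ and the Reynolds operator; as retracts and the reparametrization $U\mapsto\bC^N\otimes U$ do not increase tca-syzygy degrees, it is enough to bound those of $A_\circ$. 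This is the step that introduces $m=\sum_i d_i=\dim\bigoplus_i V_i$: the ambient polynomial tca $B(U):=\Sym\bigl((\bigoplus_i V_i)\otimes U\bigr)$ has ``rank'' $m$, and $A_\circ=B^G$.

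It remains to bound $\Tor^P_p(B^G,\bC)$. Since $B$ is module-finite over $B^G$ and the Reynolds operator splits $B^G\hookrightarrow B$ as a map of $B^G$-modules, $B^G$ is a $B^G$-module direct summand of the free tca $B$, so $\Tor^P_p(B^G,\bC)$ is a direct summand of $\Tor^P_p(B,\bC)$ and it suffices to bound the latter. Now $B$ is a polynomial ring --- hence Cohen--Macaulay --- and is generated over its invariant subring $B^G$ (the image of $P$ in $B$) by secondary elements in degrees $\le\gamma$, where $\gamma$ is the classical bound, of order $g$, for module generators of $\Sym(W)$ over $\Sym(W)^G$, uniform in $W$. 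Resolving $\bC$ over $P$ by the Koszul complex $P\otimes\lw^\bullet F$ identifies $\Tor^P_\bullet(B,\bC)$ with the homology of $B\otimes\lw^\bullet F$, i.e.\ of the Koszul complex in $B$ on the generating invariants; the internal degrees that occur are then controlled by the degree $\le\gamma$ of the secondary generators, the generation degree $\le\beta$ of the invariants, and the structure of $B$ as the polynomial tca of rank $m$, whose own Koszul data contribute roughly $\beta m$ to the internal degree per homological step. Feeding these into an effective form of the tca regularity theorem, in the spirit of Sam--Snowden's analysis of noetherian tcas, should produce a bound of the claimed shape $\beta^2 mp+\bigl((\beta-1)g-(m-1)\beta p\bigr)=\beta^2 mp+\delta_p$. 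The one genuine obstacle is exactly this last piece of bookkeeping: tracking the internal degrees sharply enough to land the precise slope $\beta^2 m$ and the correction term $\delta_p$ rather than a cruder bound that is merely linear in $p$. Everything upstream --- the passage to a tca, Weyl's generation bound, the reduction to the universal $A_\circ$, and splitting $B^G$ off $B$ --- is formal.
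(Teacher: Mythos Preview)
Your proposal stops precisely where the work begins. You correctly set up a tca framework and reduce to bounding syzygies of the universal invariant ring $A_\circ=B^G$, but your final paragraph only asserts that ``an effective form of the tca regularity theorem \dots\ should produce a bound of the claimed shape'' and concedes this is ``the one genuine obstacle''. Nothing is actually proved there. The Koszul description $\Tor^P_p(B,\bC)=H_p(B\otimes\lw^\bullet F)$ exhibits this Tor only as a subquotient of $B\otimes\lw^p F$, and $B$ lives in all internal degrees, so no bound follows. The appeal to secondary invariants does not help either: knowing that $B$ is generated over $B^G$ in degrees $\le\gamma$ relates $\Tor^P_p(B,\bC)$ back to $\Tor^P_p(B^G,\bC)$, which is what you set out to bound, so the reasoning is circular. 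And $B$ is free over $B^G$ only when $G$ acts as a reflection group, so the Cohen--Macaulay remark is doing less than advertised.

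The paper's argument is different and much shorter; in particular it invokes no regularity theorem. It uses $n$ tca variables $U_1,\dots,U_n$, one per irreducible $V_i$, rather than a single $U$ (your single-variable setup already loses information, collapsing the individual row bounds $d_i$ to their sum $m$). Via the Koszul complex, $\bT=\Tor^{\bS}_p(\bR,\bC)$ is a subquotient of $\bR\otimes\lw^p(\bE)$. Cauchy gives $\ell_i(\bR)\le d_i$; since $\bE$ sits in degrees $\le\beta$ one has $\ell_i(\lw^p(\bE))\le\beta p$; Littlewood--Richardson then yields $\ell_i(\bT)\le\beta p+d_i$. Hence $\bT_k$ vanishes identically iff it vanishes at the single specialization $U'_i=\bC^{\beta p+d_i}$, producing one representation $W_p=\bigoplus_i V_i\otimes\bC^{\beta p+d_i}$ of dimension $\beta mp+g$ with $s'_p(V)\le s'_p(W_p)$ for every $V$. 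The precise constants then drop out by applying Derksen's already-known bound $s'_p(W)\le(\beta-1)\dim W+\beta p$ to $W=W_p$; this step is entirely absent from your outline, and it is the sole source of the $(\beta-1)g$ appearing in $\delta_p$.
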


Although this is weaker than the conjecture, the bound is significant since it is independent of $V$.  Using the fact that $\beta$ and $m$ are both bounded by $g$, we deduce the following corollary:

\begin{corollary}
We have $s_p(V) \le pg^3$ for any $V$.
\end{corollary}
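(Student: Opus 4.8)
The plan is to deduce the corollary from Theorem~\ref{mainthm} by substituting the uniform bounds $\beta \le g$ and $m \le g$ into the estimate $s_p(V) \le \beta^2 m p + \delta_p$ and then optimizing the right-hand side over the box $1 \le \beta \le g$, $1 \le m \le g$. First I would record the two input inequalities: $\beta \le g$ is Noether's theorem, quoted in the excerpt, and $m = \sum_i d_i \le \sum_i d_i^2 = g$ because each $d_i \ge 1$. I would also note at the outset that we may assume $V \ne 0$, so that $\beta \ge 1$, and that we take $p \ge 1$ throughout, since $s_0(V) = \beta(V)$ can be positive while $0 \cdot g^3 = 0$.

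The one point deserving care is that one cannot simply bound $\beta^2 m p \le g^3 p$ and $\delta_p \le 0$ separately, because $\delta_p = (\beta - 1)g - (m-1)\beta p$ need not be non-positive: for $G = S_3$ one has $\beta = 4$, $m = 4$, $g = 6$, and $\delta_1 = 18 - 12 = 6 > 0$. Instead I would regroup the bound so that its joint monotonicity becomes transparent:
\[
\beta^2 m p + \delta_p = \beta^2 m p - (m-1)\beta p + (\beta - 1)g = \beta p\big(m(\beta - 1) + 1\big) + (\beta - 1)g .
\]
For $\beta \ge 1$ and $m \ge 1$, each of $\beta p$, $m(\beta - 1) + 1$, and $(\beta - 1)g$ is non-negative and non-decreasing in both $\beta$ and $m$, so the whole expression is non-decreasing in each variable on the box; hence it is maximized at $\beta = m = g$, where it equals
\[
g p\big(g(g - 1) + 1\big) + g(g - 1) = g^3 p - g(g-1)(p-1) .
\]
Since $p \ge 1$ the subtracted term is non-negative, so this is at most $g^3 p$. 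Chaining the inequalities yields $s_p(V) \le g^3 p$.

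I do not expect a genuine obstacle here: once Theorem~\ref{mainthm} is available, this is an elementary optimization. The only things to be careful about are performing the regrouping above rather than estimating $\beta^2 m p$ and $\delta_p$ independently, and keeping the standing assumption $p \ge 1$ in view, since the statement is false at $p = 0$.
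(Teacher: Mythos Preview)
Your argument is correct and mirrors the paper's: the paper rewrites $\beta^2 mp + \delta_p$ as $(\beta-1)(\beta mp + g) + \beta p$ (algebraically the same as your regrouping $\beta p\big(m(\beta-1)+1\big)+(\beta-1)g$), substitutes $\beta=m=g$ by monotonicity, and bounds the result by $pg^3$ using $(p-1)(g-1)\ge 0$. One aside is off, though it does not affect the proof: since $S\to R$ is surjective, $\Tor_0^S(R,\bC)=R\otimes_S\bC\cong\bC$ is concentrated in degree~$0$, so $s_0(V)=0$ (not $\beta(V)$) and the case $p=0$ holds trivially.
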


\begin{remark}
In fact, $m$ and $\beta$ are often strictly smaller than $g$, sometimes significantly so.  We have $m \le \sqrt{ng}$, where $n$ is the number of conjugacy classes in $G$.  Typically, $n$ is much smaller than $g$; for example, if $G$ is a symmetric group then $n=O(g^{\epsilon})$, for any $\epsilon>0$.  The currently known bounds on $\beta$ are not very sharp.  A result of Cziszter--Domokos \cite{CziszterDomokos} states that $\beta<\tfrac{1}{2} g$ unless $G$ has a cyclic subgroup of index two, or is one of four exceptions.  A conjecture of Pawale \cite[Conj.~3.9]{Wehlau} asserts that if $G$ is the semi-direct product of cyclic groups of prime orders $p$ and $q$, with $q \mid p-1$, then $\beta=p+q-1$; note that $m=p+q-1$ in this case as well.  When $p$ and $q$ are approximately equal, this gives $m=\beta=O(\sqrt{g})$.  Thus, in many case, our theorem gives a bound of the form $s_p(V)=O(pg^{\theta})$ with $\theta<3$.
\end{remark}

\begin{remark}
Theorem~\ref{mainthm} (and our proof of it) is valid over any field of characteristic zero. Derksen also proposed his conjecture in positive characteristic not dividing $g$.  It would be interesting if our proof could be adapted to work in this setting.
\end{remark}

We now prove the theorem.  Put $s'_p(V)=s_p(V; E)$ where $E=\bigoplus_{i=1}^{\beta} R_i$.  As discussed, we have $s_p(V) \le s'_p(V)$, so it suffices to bound the latter.  The key result is the following lemma.  We make critical use of the theory of Schur functors in its proof.  We also make use of ideas from the theory of twisted commutative algebras, though these objects do not appear by name.  We refer to \cite{expos} for background on this material.

\begin{lemma}
\label{keylem}
For each $p \ge 1$ there is a representation $W_p$ of $G$ such that $s'_p(V) \le s'_p(W_p)$ for any representation $V$.  The dimension of $W_p$ is $\beta mp+g$.
\end{lemma}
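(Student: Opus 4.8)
The plan is to deduce the lemma from a ``universal'' version of the resolution, in which the multiplicity spaces of the irreducibles of $G$ are treated as formal parameters; the bound on $\dim W_p$ then falls out of bookkeeping with rows of partitions.

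First I would set up the universal family. Write $V = \bigoplus_{i=1}^n \rho_i \otimes M_i$, where $\rho_1, \dots, \rho_n$ are the irreducible representations of $G$, $\dim \rho_i = d_i$, and $M_i$ is the multiplicity space of $\rho_i$ in $V$. I regard the tuple $\bM = (M_1, \dots, M_n)$ as variable: set $V_{\bM} = \bigoplus_i \rho_i \otimes M_i$, $R_{\bM} = \Sym(V_{\bM})^G$ (generated in degrees $\le \beta$, since $\beta$ is the Noether number), $E_{\bM} = \bigoplus_{d=1}^{\beta} (R_{\bM})_d$, and $S_{\bM} = \Sym(E_{\bM})$, and regard each as a polynomial functor of $\bM$ --- equivalently a representation of $\GL(M_1) \times \cdots \times \GL(M_n)$, which is the twisted commutative algebra point of view. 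By the Cauchy identity $\Sym(\rho_i \otimes M_i) = \bigoplus_{\lambda} \mathbf{S}_{\lambda}(\rho_i) \otimes \mathbf{S}_{\lambda}(M_i)$ and the vanishing of $\mathbf{S}_{\lambda}(\rho_i)$ once $\lambda$ has more than $d_i$ rows --- this vanishing being the form of Weyl's idea I need --- the functor $\bM \mapsto \Sym(V_{\bM})$ involves only external Schur functors $\bigotimes_i \mathbf{S}_{\mu^{(i)}}(M_i)$ in which $\mu^{(i)}$ has at most $d_i$ rows. Applying $G$-invariants (an exact functor, by averaging in characteristic zero) preserves this, and, since $E_{\bM}$ lives in internal degrees $\le \beta$, the $\mu^{(i)}$ occurring in $E_{\bM}$ have at most $\min(d_i, \beta)$ rows and at most $\beta$ boxes.

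Next I would control the syzygies. Compute $\Tor^{S_{\bM}}_p$ via the Koszul complex $(-) \otimes \bigwedge^{\bullet} E_{\bM}$; its differentials are maps of polynomial functors of $\bM$, and evaluation of a polynomial functor at a tuple of vector spaces is exact, so $\Tor^{S_{\bM}}_p(R_{\bM}, \bC)$ is a polynomial functor of $\bM$ whose value at the multiplicity spaces of a given $V$ is the ordinary $\Tor^S_p(R, \bC)$. To bound rows, replace $R_{\bM}$ by the larger $S_{\bM}$-module $\Sym(V_{\bM})$ (an $S_{\bM}$-module via $S_{\bM} \to R_{\bM} \subset \Sym(V_{\bM})$): writing $\Sym(V_{\bM}) = \bigoplus_j \rho_j \otimes C_j$ for its isotypic decomposition, with $C_0 = R_{\bM}$ and the $G$- and $S_{\bM}$-actions commuting, gives $\Tor^{S_{\bM}}_p(R_{\bM}, \bC) = \Tor^{S_{\bM}}_p(\Sym(V_{\bM}), \bC)^G$. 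The latter is a subquotient of $\Sym(V_{\bM}) \otimes \bigwedge^p E_{\bM}$: the first tensor factor contributes partitions with $\le d_i$ rows, while $\bigwedge^p E_{\bM} \subset E_{\bM}^{\otimes p}$ contributes partitions with $\le p\beta$ rows (a constituent of a tensor product of $p$ Schur functors has at most the sum of their numbers of rows). Hence every external Schur functor $\bigotimes_i \mathbf{S}_{\mu^{(i)}}(M_i)$ occurring in $\Tor^{S_{\bM}}_p(R_{\bM}, \bC)$, in any internal degree, has each $\mu^{(i)}$ with at most $\beta p + d_i$ rows.

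Finally I would conclude. Put $W_p = \bigoplus_i \rho_i^{\oplus(\beta p + d_i)}$, so that $\dim W_p = \sum_i d_i(\beta p + d_i) = \beta m p + g$. Suppose $\Tor^S_p(R, \bC)$ is nonzero in internal degree $d$ for some $V$ whose multiplicity spaces have dimensions $a_i$. Then some external Schur functor $\bigotimes_i \mathbf{S}_{\mu^{(i)}}(M_i)$ occurs with positive multiplicity in the universal functor in degree $d$ and is nonvanishing on $V$'s multiplicity spaces, so each $\mu^{(i)}$ has at most $a_i$ rows; by the previous paragraph it also has at most $\beta p + d_i$ rows, hence at most $\min(a_i, \beta p + d_i) \le \beta p + d_i$ rows, so it is nonvanishing on the multiplicity spaces of $W_p$, and therefore $\Tor^S_p(\Sym(W_p)^G, \bC)$ is nonzero in degree $d$ too. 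Thus $s'_p(V) \le s'_p(W_p)$, which is the lemma. I expect the one real subtlety to be the claim, in the second step, that $\Tor^{S_{\bM}}_p(R_{\bM}, \bC)$ is genuinely a polynomial functor of $\bM$ restricting correctly under evaluation --- that the whole resolution can be organized over the base $\bM$ at once --- which is precisely where the machinery of \cite{expos} enters; the row-counting that yields the numerical bound is then routine.
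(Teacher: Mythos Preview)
Your argument is correct and follows essentially the same route as the paper: set up the universal polynomial functor in the multiplicity spaces, compute $\Tor_p$ via the Koszul resolution so that it is a subquotient of (invariant ring) $\otimes\ \lw^p E$, bound the number of rows in the $i$th slot by $d_i$ (Cauchy) plus $\beta p$ (box count in $\lw^p E$), and then specialize to $W_p=\bigoplus_i \rho_i^{\oplus(\beta p+d_i)}$. The one inessential detour is your passage through $\Sym(V_{\bM})$ and the identity $\Tor^{S_{\bM}}_p(R_{\bM},\bC)=\Tor^{S_{\bM}}_p(\Sym(V_{\bM}),\bC)^G$: since you already observed that taking $G$-invariants preserves the row bound, the paper simply works with $R_{\bM}\otimes\lw^p E_{\bM}$ directly and invokes Cauchy on $R_{\bM}$, which is cleaner but equivalent.
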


\begin{proof}
Let $V_1, \ldots, V_n$ be the irreducible representations of $G$.  For vector spaces $U_1, \ldots, U_n$, put
\begin{displaymath}
\bV=\bV(U_{\bullet})=(V_1 \otimes U_1) \oplus \cdots \oplus (V_n \otimes U_n).
\end{displaymath}
Note that every representation of $G$ is isomorphic to one of the form $\bV(U_{\bullet})$, for some choice of $U_{\bullet}$.  Put $\bR=\Sym(\bV)^G$, let $\bE=\bigoplus_{i=1}^{\beta} \bR_i$, let $\bS=\Sym(\bE)$ and let $\bT=\Tor^{\bS}_p(\bR, \bC)$.  We regard all of these objects as polynomial functors of the $U_i$; as such, each decomposes into Schur functors \cite[\S 6.2.1]{expos}.  The $\bS$-module $\bC$ admits the Koszul resolution $\bS \otimes \lw^{\bullet}(\bE)$, which is functorial (this is explained in the introduction of \cite{expos}).  We can compute $\bT$ by tensoring this resolution with $\bR$.  It follows that $\bT$, as a functor, is a subquotient of $\bR \otimes \lw^p(\bE)$.

For a functor $F$ of the sort we are considering, let $\cP_i(F)$ be the set of partitions $\lambda$ such that the Schur functor $\bS_{\lambda}(U_i)$ appears in $F$ and let $\ell_i(F)$ be the maximum number of rows of any $\lambda \in \cP_i(F)$.  The Cauchy formula \cite[\S 8.2.3]{expos} shows that $\ell_i(\bR) \le d_i$.  Since every partition in $\cP_i(\bE)$ has at most $\beta$ boxes, every partition in $\cP_i(\lw^p(\bE))$ has at most $\beta p$ boxes, and so $\ell_i(\lw^p(\bE)) \le \beta p$.  The Littlewood--Richardson rule \cite[\S 9.1.1]{expos} now gives $\ell_i(\bR \otimes \lw^p(\bE)) \le \beta p+d_i$, and so $\ell_i(\bT) \le \beta p+d_i$.

Let $\bT_k$ be the degree $k$ piece of $\bT$.  Obviously $\ell_i(\bT_k) \le \beta p+d_i$.  It follows that $\bT_k$ is non-zero if and only if $\bT_k(U'_{\bullet})$ is non-zero, where $U'_i=\bC^{\beta p+d_i}$.  We thus find that $\bT(U_{\bullet})$ is concentrated in degrees $\le k$ for all $U_{\bullet}$ if and only if it is so when $U_{\bullet}=U'_{\bullet}$.  We therefore obtain the theorem, with $W_p=\bV(U'_{\bullet})$.  We have
\begin{displaymath}
\dim(W_p)=\sum_{i=1}^n d_i(\beta p+d_i)=\beta mp+g. \qedhere
\end{displaymath}
\end{proof}

\begin{lemma}
We have $s_p'(V) \le (\beta-1)\dim(V)+\beta p$, for any $V$.
\end{lemma}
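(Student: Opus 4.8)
The plan is to reduce the computation of $\Tor^S_p(R,\bC)$ --- where $R=\Sym(V)^G$, $E=\bigoplus_{i=1}^{\beta}R_i$, and $S=\Sym(E)$ --- to a Koszul homology computation over a much smaller polynomial ring, by splitting off a homogeneous system of parameters. I would use two standard facts about $R$: it is Cohen--Macaulay (Hochster--Eagon), and it admits a homogeneous system of parameters $\theta_1,\dots,\theta_d$, with $d=\dim V$, all of whose degrees are $\le\beta$. Granting these, since $\deg\theta_i\le\beta$ we have $\theta_i\in R_{\deg\theta_i}\subseteq E$, so I may choose a graded complement $F$ to $\Theta:=\langle\theta_1,\dots,\theta_d\rangle$ inside $E$, which gives $S=\bC[\theta_1,\dots,\theta_d]\otimes\Sym(F)$.

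Next I would compute $\Tor^S_p(R,\bC)$ from the Koszul resolution $S\otimes\lw^{\bullet}E$ of $\bC$ over $S$: tensoring with $R$ gives $R\otimes\lw^{\bullet}E=R\otimes\lw^{\bullet}\Theta\otimes\lw^{\bullet}F$, the total complex of a double complex whose two differentials are the Koszul differential $d_{\Theta}$ of $\theta_1,\dots,\theta_d$ and the Koszul differential $d_F$ attached to $F\subseteq R$. Since $R$ is Cohen--Macaulay and $\theta_1,\dots,\theta_d$ is a system of parameters, it is a regular sequence, so the homology of $(R\otimes\lw^{\bullet}\Theta,d_{\Theta})$ is concentrated in homological degree $0$, where it equals $\ol{R}:=R/(\theta_1,\dots,\theta_d)$. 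Hence the associated spectral sequence collapses and yields $\Tor^S_p(R,\bC)\cong\Tor^{\Sym(F)}_p(\ol{R},\bC)$.

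Finally I would bound the degrees occurring in $\Tor^{\Sym(F)}_p(\ol{R},\bC)$, which is a subquotient of $\ol{R}\otimes\lw^{p}F$ (via the Koszul resolution of $\bC$ over $\Sym(F)$). Every element of $F\subseteq E$ has degree $\le\beta$, so $\lw^{p}F$ lies in degrees $\le\beta p$. On the other hand, $\theta_1,\dots,\theta_d$ is also a system of parameters, hence a regular sequence, in the polynomial ring $\Sym(V)$; applying the Reynolds operator $\Sym(V)\to R$ shows $(\theta_1,\dots,\theta_d)R=R\cap(\theta_1,\dots,\theta_d)\Sym(V)$, so $\ol{R}$ embeds into the complete intersection $\Sym(V)/(\theta_1,\dots,\theta_d)\Sym(V)$, which is concentrated in degrees $\le\sum_i(\deg\theta_i-1)\le d(\beta-1)$. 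Thus $\ol{R}\otimes\lw^{p}F$, and with it $\Tor^S_p(R,\bC)$, is concentrated in degrees $\le(\beta-1)\dim V+\beta p$, giving the claimed bound on $s'_p(V)$.

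The step I expect to be the real obstacle is the second standard fact invoked above: that $\Sym(V)^G$ has a homogeneous system of parameters in degrees $\le\beta$. Dade's construction only produces one in degrees $\le g$, and pushing the bound down to the Noether number $\beta$ is the delicate point --- it is precisely where the hypothesis that $R_+$ is generated in degrees $\le\beta$ must be used, via a graded prime-avoidance argument carried out one degree at a time. Once that input is in hand, the double complex, the collapse of its spectral sequence, and the degree bookkeeping are all routine.
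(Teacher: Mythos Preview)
The paper gives no argument of its own here: its entire proof is the sentence ``This follows from the proof of \cite[Thm.~1]{Derksen}.'' Your write-up is essentially the argument one extracts from that reference --- choose a homogeneous system of parameters $\theta_1,\dots,\theta_d$ lying in $E$, use Cohen--Macaulayness of $R$ (Hochster--Eagon) so that the Koszul double complex $R\otimes\lw^\bullet\Theta\otimes\lw^\bullet F$ collapses to $\ol R\otimes\lw^\bullet F$, and then bound degrees by embedding $\ol R$ into the complete intersection $\Sym(V)/(\theta)$ of top degree $\sum_i(\deg\theta_i-1)\le d(\beta-1)$ while $\lw^p F$ lives in degrees $\le\beta p$. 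The spectral-sequence collapse, the Reynolds-operator identity $(\theta)R=R\cap(\theta)\Sym(V)$, and the final degree count are all correct as you have them.

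You have also correctly located the one non-routine input: the existence of a homogeneous system of parameters for $R$ with all degrees $\le\beta$. This is indeed the substantive content being imported from Derksen. One word of caution about your proposed justification: naive graded prime avoidance does not immediately suffice, since at a given inductive step the minimal primes over $(\theta_1,\dots,\theta_{i-1})$ may each be avoided only by invariants of \emph{different} degrees $\le\beta$, whereas a homogeneous $\theta_i$ must live in a single graded piece. So some additional care (or a slightly different packaging of the argument) is needed beyond the slogan ``prime avoidance one degree at a time.'' But you were right to flag this as the crux, and once it is in hand the rest of your outline is sound and matches what the paper's citation is pointing to.
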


\begin{proof}
This follows from the proof of \cite[Thm.~1]{Derksen}.
\end{proof}

Combining the two lemmas, we find
\begin{displaymath}
s'_p(V) \le s'_p(W_p) \le (\beta-1)(\beta mp+g)+\beta p= \beta^2mp+ \delta_p
\end{displaymath}
for any $V$, and so the theorem follows.  As $m$ and $\beta$ are both at most $g$, we have
\begin{displaymath}
(\beta-1)(\beta mp+g)+\beta p \le (g-1)(g^2p+g)+gp = pg^3-g(pg+1-p-g) \le pg^3,
\end{displaymath}
where we have used the inequality $p+g \le pg+1$.  This establishes the corollary.

\begin{remark}
The main idea in Lemma~\ref{keylem} --- bound the number of rows in the universal case and then specialize to a particular case of sufficient dimension --- is due to Weyl, who used it to show that $\beta=\beta(\bC[G])$ \cite[Thm.~3.13]{Wehlau}.  The theory of twisted commutative algebras (of which $\bR$ and $\bS$ are examples) systematizes this idea.
\end{remark}

\subsection*{Acknowledgements}
We thank Steven Sam for helpful comments.

\end{document}